\newcommand{\C}[1]{{\cal {#1}}}
\renewcommand{\bar}{\overline}
\title{Second-order sufficient optimality conditions in the
calculus of variations
\thanks{
November 27, 2024.
The author gratefully acknowledges support by the National
Science Foundation under grant 2031213.}}
\author{
    William W. Hager\thanks{{\tt hager@ufl.edu},
        http://people.clas.ufl.edu/hager/,
        PO Box 118105,
        Department of Mathematics,
        University of Florida, Gainesville, FL 32611-8105.
        Phone (352) 294-2308.}
}
\begin{document}
\maketitle
\begin{abstract}
Some classic second-order sufficient optimality conditions in the calculus of
variations are shown to be equivalent, while also introducing a new
equivalent second-order condition which is extremely easy to apply:
simply integrate a linear second-order initial value problem
and check that the solution is positive over the problem domain.
\end{abstract}

\begin{keywords}
calculus of variations;
second-order optimality conditions;
strict local minima;
\end{keywords}
\begin{AMS}
65L10, 65L05, 49K15
\end{AMS}

\section{Introduction}
This note concerns an efficient algorithm for showing that a given
stationary point in the calculus of variations is a strict local minimizer.
Our focus is second-order sufficient optimality conditions for
classic problems in the calculus of variations such as
\begin{equation}\label{P}
\min ~ F(y) := \int_0^1 f(y'(x), y(x), x) ~dx \quad \mbox{subject to }
y \in \C{C}_0^1,
\end{equation}
where $\C{C}_0^1$ denotes the space of continuously differentiable,
real-valued functions defined on $[0, 1]$ with $y(0) = y(1) = 0$.
If $f \in \C{C}^1$, then at a local minimizer $y^*$, Euler's equation
is satisfied:
\begin{equation}\label{E}
\frac{d}{dx} \left( \frac{\partial f}{\partial y'}
\left(({y^*}'(x), y^*(x), x) \right) \right) =
\frac{\partial f}{\partial y} \left(({y^*}'(x), y^*(x), x) \right) .
\end{equation}
This is a necessary optimality condition.

A trivial second-order sufficient optimality condition is formulated
in terms of the Hessian of the integrand of $f$ along a solution $y^*$ of
(\ref{E}).
Suppose that $f \in \C{C}^2$ and $y^*$ satisfies the Euler equation (\ref{E}).
Let $z \in \mathbb{R}^2$ and let us consider the $2$ by $2$ Hessian
\begin{equation}\label{Q}
\nabla^2_{z} f\left( z, x \right)
\bigg|
_{z = ({y^*}'(x), y^*(x))} .
\end{equation}
If this Hessian is positive definite for all $x \in [0, 1]$, then $y^*$ is
a strict local minimizer of (\ref{P}).
This follows from a second-order Taylor expansion of the integrand
of (\ref{P}) around $({y^*}'(x), y^*(x))$.
The first-order term in the Taylor expansion vanishes since $y^*$ satisfies
the Euler equation (\ref{E}), while the second-order term in the expansion
is nonnegative in a neighborhood of $({y^*}'(x), y^*(x))$.
The requirement of positive definiteness of the Hessian (\ref{Q})
for all $x \in [0, 1]$ is a rather strong condition which limits the
applicability of this second-order condition.

A second-order sufficient optimality condition that does not require
positive definiteness of the quadratic (\ref{Q}) on $[0, 1]$
was obtained by Legendre.
Let $\Omega$ denote the quadratic form associated with the Hessian (\ref{Q}):
\[
\Omega(h) = \int_0^1 P(x) h'(x)^2 + 2R(x) h'(x)h(x) + Q(x) h(x)^2 \; dx,
\]
where $h \in \C{C}_0^1$ and
$P$, $Q$, and $R$ correspond to the $(1, 1)$, $(2,2)$, and $(1,2)$
elements respectively of the Hessian (\ref{Q}).
If $f \in \C{C}^2$, then $P$, $Q$, and $R \in \C{C}^0$.
Any conditions that ensure a lower bound of the form
\[
\Omega(h) \ge \gamma \int_0^1 h'(x)^2 + h(x)^2 \; dx
\]
for some $\gamma > 0$ lead to a sufficient optimality condition for (\ref{P}).
Since
\[
\frac{d}{dx} h(x)^2 = 2 h(x)h'(x),
\]
an integration by parts yields
\begin{equation} \label{compressedOmega}
\Omega (h) = \int_0^1 P(x) h'(x)^2 + [Q(x) - R'(x)]h(x)^2 \; dx
\end{equation}
assuming $R \in \C{C}^1$.
Hence, when $R \in \C{C}^1$, there is no loss of generality in focusing on
quadratic forms with $R = 0$.
Let $\Gamma(h)$ denote the quadratic form $\Omega(h)$ in the special case
where $R = 0$:
\begin{equation}\label{Gamma-def}
\Gamma(h) = \int_0^1 P(x) h'(x)^2 + Q(x)h(x)^2 \; dx,
\end{equation}
where the $Q$ term in (\ref{Gamma-def}) is identified with the
$Q - R'$ term in (\ref{compressedOmega}).

If $\Omega(h) \ge 0$ for all $h \in \C{C}_0^1$, then
$P \ge 0$ on $[0, 1]$;
consequently, Legendre focused on quadratic forms $\Omega$ where
$P(x) \ge \alpha > 0$ for all $x \in [0, 1]$, and he completed
the square in $\Gamma$ using any bounded solution on $[0, 1]$ to the
Ricatti equation
\begin{equation}\label{R}
w'(x) = \frac{w(x)^2}{P(x)} - Q(x) .
\end{equation}
Adding the identity
\[
0 = \int_0^1 \frac{d}{dx}\left( w(x) h(x)^2 \right) =
\int_0^1 \left[ w'(x) h(x)^2  + 2w(x)h(x)h'(x) \right] \; dx
\]
to $\Gamma$ and substituting for $w'$ using the Ricatti equation yields a
perfect square:
\begin{equation}\label{S}
\Gamma(h) =
\int_0^1 P(x) \left( h'(x) + \frac{w(x)}{P(x)} h(x) \right)^2 \; dx.
\end{equation}
Since $P$ is strictly positive,
$\Gamma (h) \ge 0$ and $\Gamma (h) = 0$ if and only if $h = 0$,
or equivalently, the quadratic form $\Gamma$ is positive definite.
In the next section, we show that $\Gamma$ is positive definite
if and only if there exists a bounded solution of the
Ricatti equation (\ref{R}) on $[0, 1]$.

A different, but related view of second-order optimality,
is developed by Gelfand and Fomin \cite{GelfandFomin}
in the context of conjugate points.
The Euler equation associated with a stationary point of $\Gamma$ is
\begin{equation}\label{EE}
\frac{d}{dx} (P(x) u'(x)) = Q(x) u(x) .
\end{equation}
It is assumed that $P \in \C{C}^1$ and there exists $\alpha$ such
that $P(x) \ge \alpha > 0$ for all $x \in [0, 1]$.
Gelfand and Fomin consider (\ref{EE}) in the context of a boundary-value
problem with conditions
\begin{equation}\label{BVP}
u(0) = u(a) = 0
\end{equation}
where $a \in (0, 1]$.
The point $x = a$ is said to be conjugate to $x = 0$ if
there exists a nonzero solution to (\ref{EE}) satisfying (\ref{BVP}).
On pages 106--111 of \cite{GelfandFomin}, Gelfand and Fomin show that
$\Gamma$ is positive definite over $C_0^1$ if and only if
$a$ is never conjugate to $x = 0$ for any choice of $a \in (0, 1]$.

We will also view (\ref{EE}) as an initial-value problem with initial conditions
\begin{equation}\label{IVP}
u(0) = u_0 \quad \mbox{and} \quad u'(0) = u_1.
\end{equation}
Since (\ref{EE}) is a linear equation,
there exists a unique solution on $[0, 1]$ to the initial-value problem
(\ref{IVP}) for any choice of $u_0$ and $u_1$.

The development of second-order conditions in optimization is an area of
intense research that continues today.
As an example of current research, the paper \cite{Frankowska24} of
Frankowska and Osmolovskii develops second-order optimality conditions for a
general set in a Banach space.
There has been particular interest in the development of second-order
conditions in optimal control;
a few references include \cite{casas08, osmolovskii12, Vossen2010}.
However, here our interest is not in the further development or extension of
second-order conditions, but rather in numerically checking whether
a second-order condition is satisfied.
In general, it may not be easy to use either the Ricatti equation
or the conjugacy conditions to check numerically whether a solution
of Euler's equation (\ref{E}) is a strict local minimizer.
With the Ricatti equation strategy, showing that the second-order
optimality condition holds amounts to finding an initial condition for which
the solution to the Ricatti equation (\ref{R}) does not blow up on $[0, 1]$.
With the conjugate point approach, showing that the second-order
optimality condition holds amounts to verifying
that there does not exist $a \in (0, 1]$ that is conjugate to $x = 0$.
The goal of this paper is to establish the close connection between both these
approaches to second-order optimality, while also providing a simple numerical
test for a strict local minimizer.

\section{Main Results}

Our observations are contained in the following theorem.

\begin{theorem} \label{maintheorem}
Suppose that there exists $\alpha$ such that $P(x) \ge \alpha > 0$ for
all $x \in [0, 1]$, where $P \in \C{C}^1$ and $Q \in \C{C}^0$ in
$(\ref{Gamma-def})$.
The following conditions are equivalent:
\begin{enumerate}
\item[{\rm (C1)}]
There exists a strictly positive solution $u$ of the linear differential
equation {\rm (\ref{EE})} on $[0, 1]$.
\item[{\rm (C2)}]
There exists a uniformly bounded solution $w$ of the Ricatti equation
{\rm (\ref{R})} on $[0, 1]$.
\item[{\rm (C3)}]
There exists $\gamma > 0$ such that
\[
\Gamma(h) \ge \gamma \int_0^1 h'(x)^2 + h(x)^2 ~dx \quad \mbox{for all } h \in
\C{H}_0^1,
\]
where $\C{H}_0^1$ is the space of function with $h'$ square integrable
over $[0, 1]$ and $h(0) = h(1) = 0$.
\item[{\rm (C4)}]
Whenever $u$ is a solution of {\rm (\ref{EE})} with $u(0) = u(a) = 0$ for
some $0 < a \le 1$, $u$ is identically zero on $[0, a]$;
thus there are no points $a \in (0, 1]$ conjugate to $x = 0$.
\item[{\rm (C5)}]
The solution of the initial-value problem for {\rm (\ref{EE})}
with $u_0 = 0$ and $u_1 = 1$ is positive on $(0, 1]$.
\end{enumerate}
\end{theorem}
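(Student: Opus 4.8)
The plan is to prove the five conditions equivalent through the cycle (C1) $\Rightarrow$ (C3) $\Rightarrow$ (C4) $\Rightarrow$ (C5) $\Rightarrow$ (C1), supplemented by the separate equivalence (C1) $\Leftrightarrow$ (C2). I would handle (C1) $\Leftrightarrow$ (C2) first, as it is the classical Riccati substitution. Given a strictly positive solution $u$ of (\ref{EE}), set $w = -Pu'/u$; differentiating and using $(Pu')' = Qu$ gives $w' = w^2/P - Q$, and $w$ is bounded because $u$ is continuous and bounded away from zero on $[0,1]$. Conversely, from a bounded solution $w$ of (\ref{R}) the function $u(x) = \exp\left(-\int_0^x w(s)/P(s)\,ds\right)$ is strictly positive and, reversing the computation, solves (\ref{EE}).

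For (C1) $\Rightarrow$ (C3) I would change variables by writing $h = u\phi$, with $u$ the positive solution from (C1). For $h \in \C{H}_0^1$ the ratio $\phi = h/u$ lies in $\C{H}_0^1$ as well, since $u$ and $1/u$ are bounded with bounded derivatives. Substituting $h' = u'\phi + u\phi'$ into (\ref{Gamma-def}), integrating the cross term by parts (the boundary contributions vanish because $\phi(0) = \phi(1) = 0$), and invoking $(Pu')' = Qu$ collapses the form to the single expression $\Gamma(h) = \int_0^1 P u^2 (\phi')^2\,dx$, which is just the completed square (\ref{S}) reorganized. Because $P \ge \alpha$ and $u \ge m > 0$, this is at least $\alpha m^2 \int_0^1 (\phi')^2\,dx$; the Poincar\'e inequality then bounds $\int_0^1 \left[(h')^2 + h^2\right] dx$ by a constant times $\int_0^1 (\phi')^2\,dx$, delivering (C3) with an explicit $\gamma$.

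Next, (C3) $\Rightarrow$ (C4) is a broken-extremal argument: were there a nonzero solution $u$ of (\ref{EE}) with $u(0) = u(a) = 0$, I would extend it by zero past $a$ to get $\tilde u \in \C{H}_0^1$, and an integration by parts using (\ref{EE}) would show $\Gamma(\tilde u) = 0$, contradicting the strict bound in (C3). The equivalence (C4) $\Leftrightarrow$ (C5) uses only uniqueness for the linear initial-value problem (\ref{IVP}): every solution vanishing at $x = 0$ is a scalar multiple of the distinguished solution with $u_0 = 0$, $u_1 = 1$, so ``no conjugate point in $(0,1]$'' is exactly ``this solution has no further zero,'' and since $u'(0) = 1 > 0$ makes it positive near $0$, absence of zeros upgrades to positivity on $(0,1]$.

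The closing step (C5) $\Rightarrow$ (C1) is where I expect the genuine difficulty. The solution furnished by (C5) is positive on $(0,1]$ but vanishes at $x = 0$, so it is not the strictly positive solution (C1) demands. The plan is to perturb the initial data: let $u_\delta$ solve (\ref{EE}) with $u_\delta(0) = \delta$ and $u_\delta'(0) = 1$. Continuous dependence of linear-ODE solutions on their data gives $u_\delta \to u$ in $\C{C}^1[0,1]$ as $\delta \to 0^+$. Fixing $\eta$ with $u' \ge 3/4$ on $[0,\eta]$ (possible since $u'(0) = 1$), for small $\delta$ one has $u_\delta' > 0$ and $u_\delta(0) = \delta > 0$ on $[0,\eta]$, hence $u_\delta > 0$ there, while on the compact set $[\eta,1]$ the strict positivity of $u$ survives a small perturbation. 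Thus $u_\delta > 0$ throughout $[0,1]$ for $\delta$ small, which is (C1). The hard part is precisely this endpoint transfer: it is the only step that is analytic rather than algebraic, and the degeneration of the (C5)-solution at the left endpoint is easy to mishandle.
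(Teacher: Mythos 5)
Your proof is correct, but it takes a genuinely different route from the paper's, which proves the single cycle (C1)$\Rightarrow$(C2)$\Rightarrow$(C3)$\Rightarrow$(C4)$\Rightarrow$(C5)$\Rightarrow$(C1). There the coercivity step runs through the Riccati solution: Legendre's completed square (\ref{S}) gives $\Gamma(h)=\int_0^1 P(x)r(x)^2\,dx$ with $r=h'+wh/P$, and the integrating-factor representation (\ref{h-solution}) yields $\|h\|\le c\|r\|$ and $\|h'\|\le c\|r\|$, hence (C3) with $\gamma=\alpha/(2c^2)$. You instead derive (C3) directly from (C1) via the substitution $h=u\phi$: integrating the cross term by parts (legitimate since $u\in\C{C}^2$ by linear ODE theory, so $Puu'\in\C{C}^1$) and invoking $(Pu')'=Qu$ collapses the form to $\Gamma(h)=\int_0^1 Pu^2(\phi')^2\,dx\ge\alpha m^2\int_0^1(\phi')^2\,dx$, and Poincar\'e finishes; this bypasses the Riccati equation in the coercivity argument and gives an explicit constant. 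You then recover (C2) through the side equivalence (C1)$\Leftrightarrow$(C2), and your converse $u=\exp\bigl(-\int_0^x w(s)/P(s)\,ds\bigr)$ is a genuine addition: the paper never proves (C2)$\Rightarrow$(C1) directly, obtaining it only by traversing the whole cycle; the regularity needed is automatic, since any solution $w$ of (\ref{R}) is differentiable, hence continuous, hence $\C{C}^1$ because $w^2/P-Q$ is then continuous. Your remaining steps match the paper in substance: (C3)$\Rightarrow$(C4) is the identical zero-extension argument, your (C4)$\Leftrightarrow$(C5) via scalar multiples of the distinguished solution is a harmless strengthening of the paper's one-way implication, and your (C5)$\Rightarrow$(C1) perturbation $u_\delta(0)=\delta$, $u_\delta'(0)=1$ is, by linearity, exactly the paper's combination $U_0+\delta U_1$ with $U_1(0)=1$, $U_1'(0)=0$ --- you secure the left endpoint using $u_\delta'>0$ near zero where the paper uses $U_1\ge 1/2$ there, and both work (your worry about this step being the delicate one is apt, but your handling is sound). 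What the paper's route buys is keeping Legendre's classical Riccati argument at center stage, which serves its expository aim; what yours buys is a shorter, more quantitative coercivity proof and a self-contained two-way (C1)$\Leftrightarrow$(C2).
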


Condition (C5) is easily checked: simply integrate the linear, second-order
initial-value problem (\ref{EE}) forward from the initial condition
$u(0) = 0$ and $u'(0) = 1$.
If the solution is positive on $(0, 1]$, then (C3) holds.

\begin{proof}

\noindent
{(C1) implies (C2):}
Let $u_0 = u(0)$ and $u_1 = u'(0)$ for the given solution of (\ref{EE}).
If the initial-value problem for (\ref{EE}) is written as a first-order system
\[
\left[ \begin{array}{c} u' \\ v' \end{array}\right] = 
\left[ \begin{array}{cc}
0 & 1 \\ Q/P & - P'/P \end{array}\right]
\left[ \begin{array}{c} u \\ v \end{array} \right], \quad
u(0) = u_0, \quad v(0) = u_1,
\]
then the coefficient matrix is continuous under the assumptions of the theorem.
Hence, by the standard existence theory \cite[Sect. 1.5]{TaylorPDE}
for the solution of a linear system of differential equations,
the solution pair $(u, v)$ is $\C{C}^1$, which implies
that $u \in \C{C}^2$ since $u' = v \in \C{C}^1$.
Since $u$ is strictly positive on $[0, 1]$,
it follows that
\begin{equation}\label{u'/u}
w = -\frac{Pu'}{u} \in \C{C}^1 \quad \mbox{on } [0, 1].
\end{equation}
Differentiating $w$ and using (\ref{EE}) to simplify the derivative of the
numerator in (\ref{u'/u}) shows that $w$ satisfies the Ricatti equation
(\ref{R}).
Since $w \in \C{C}^1$, it is clearly bounded uniformly.
The observation that $w = -Pu'/u$ satisfies the Ricatti equation (\ref{R})
appears in \cite[p. 108]{GelfandFomin}.
\smallskip

\noindent
{(C2) implies (C3):}
As seen earlier, when there exists a solution
to the Ricatti equation (\ref{R}), $\Gamma$ reduces to a perfect square
(\ref{S}).
Define $r = h' + wh/P$.
If $h \in \C{H}_0^1$, then $wh/P$ is continuous and
$r \in \C{L}^2$, the space of square integrable functions on $[0, 1]$.
Solving for $h'$ gives
\begin{equation}\label{h'r}
h'(x) = r(x) - \frac{w(x)h(x)}{P(x)} .
\end{equation}
Using an integrating factor and the fact that $h(0) = 0$, we have
\begin{equation}\label{h-solution}
h(x) = \int_0^x \mbox{exp}\left(\int_t^x - \frac{w(s)}{P(s)} ~ ds \right)
r(t) ~ dt.
\end{equation}
Take the $L^2$ norm of (\ref{h-solution}).
Since the factor multiplying $r(t)$ in (\ref{h-solution})
is bounded uniformly in $x$ and $t \in [0, x]$,
there exists a constant $c$, independent of $r$, such that
\begin{equation}\label{h-bound}
\|h\| \le c \|r\|,
\end{equation}
where $\| \cdot\|$ is the $\C{L}^2$ norm.
Combining (\ref{h'r}) and (\ref{h-bound}), $h'$ is also bounded in terms
of $r$.
Choose $c$ larger if necessary to ensure that
\begin{equation}\label{h'-bound}
\|h'\| \le c\|r\|.
\end{equation}
Since $P(x) \ge \alpha$, we obtain
\[
\Gamma(h) = \int_0^1 P(x) r(x)^2 ~ dx 
\ge \alpha \|r\|^2 =
\frac{\alpha}{2} \left( \|r\|^2 + \|r\|^2 \right) \ge
\frac{\alpha}{2c^2} \left( \|h\|^2 + \|h'\|^2 \right) ,
\]
where (\ref{h-bound}) and (\ref{h'-bound}) are used in the last step.
Hence, (C3) holds with $\gamma = \alpha/(2c^2)$.
\smallskip

\noindent
{(C3) implies (C4):}
Let $u$ denote the solution of (\ref{EE}) given in (C4).
Since $u$ is also the solution of initial-value problem for (\ref{EE})
with $u_0 = u(0)$ and $u_1 = u'(0)$, it follows that $u \in \C{C}^2$ on
$[0, a]$.
Extend $u$ as zero on the interval $[a, 1]$.
The extended $u$ lies in $\C{H}_0^1$.
Since $u$ vanishes on $[a, 1]$, we have
\begin{equation}\label{Gamma-a}
\Gamma(u) = \int_0^a P(x)u'(x)^2 + Q(x) u(x)^2 \; dx.
\end{equation}
Integrating by parts yields
\[
\int_0^a P(x)u'(x)^2 \; dx = \int_0^a -\frac{d}{dx}(P(x)u'(x))u(x) \; dx =
\int_0^1 -Q(x)u(x)^2 \; dx,
\]
where the last equality is due to (\ref{EE}).
Combining this with (\ref{Gamma-a}) yields $\Gamma(u) = 0$, which implies that
$u := 0$ by (C3).
\smallskip

\noindent
{(C4) implies (C5):}
If $u$ is the solution of the initial value problem (\ref{IVP}) for (\ref{EE})
with $u_0 = 0$ and $u_1 = 1$, then $u(x) > 0$ for $x > 0$ near zero.
If for some $a \in (0, 1]$, we have $u(a) = 0$, then (C4) is violated.
Hence, $u$ is positive on $(0, a]$.
\smallskip

\noindent
{(C5) implies (C1):}
Let $U_i$ denote the solution of the initial-value problems
for (\ref{EE}) associated with $(u_0, u_1) =$ $(i, 1-i)$, $i = 0, 1$.
Any linear combination of $U_0$ and $U_1$ is a solution of (\ref{EE}).
By (C5), $U_0$ is positive on $(0, 1]$.
Since $U_1(0) = 1$, we will add to $U_0$ a small positive multiple of $U_1$
to achieve a solution of (\ref{EE}) that is strictly positive on the
closed interval $[0, 1]$.
In detail, since $U_1(0) = 1$ and $U_1$ is continuous, we can choose
$0 < \delta \le 1$ such that $U_1(x) \ge 1/2$ on $[0, \delta]$.
By (C5), $U_0$ is strictly positive on any closed interval
contained in $(0, 1]$.
Consequently,
\[
m = \min \{ U_0(x) : \delta \le x \le 1\} > 0.
\]
Finally, observe that
\begin{equation}\label{u(x)}
u(x) = U_0(x) + m \left( \frac{U_1(x)}{2\|U_1(x)\|_{\max}} \right),
\end{equation}
where $\|U_1(x)\|_{\max}$ is the absolute maximum of $U_1$ over the
interval $[0, 1]$, is strictly positive on $[0, 1]$.
On $[0, \delta]$, the first term in (\ref{u(x)}) is nonnegative and
the second term is strictly positive.
On $[\delta, 1]$, $U_0(x) \ge m > 0$ while the second term in
(\ref{u(x)}) is bounded in absolute value by $m/2$.
Hence, $u(x) \ge m/2$ on $[\delta, 1]$, which implies that $u(x)$ is
strictly positive on $[0, 1]$.
\end{proof}

\section{Second-order sufficient optimality conditions} \label{second}
As a consequence of Theorem~\ref{maintheorem}, we have the following
second-order sufficient optimality conditions.
\smallskip

\begin{corollary}\label{second-order}
The following conditions are sufficient for $y^* \in \C{C}_0^1$ to be a
strict local minimizer of $F$ in $(\ref{P})$:
\begin{enumerate}
\item [{\rm (S1)}]
$f \in \C{C}^2$ and $y^*$ satisfies the Euler equation $(\ref{E})$.
\item [{\rm (S2)}]
We define
\begin{eqnarray*}
P(x) &=& \left( \frac{\partial^2}{(\partial z_1)^2} \right)
f\left( z, x \right) \bigg|_{z = ({y^*}'(x),y^*(x)) }, \quad \mbox{and} \\
Q(x) &=& \left( \frac{\partial^2}{(\partial z_2)^2} - \frac{d}{dx}
\frac{\partial^2}{\partial z_1 \partial z_2}\right)
f\left( z, x \right) \bigg|_{z = ({y^*}'(x),y^*(x))} .
\end{eqnarray*}
$P \in \C{C}^1$ with $P$ strictly positive on $[0, 1]$,
and the mixed partial derivative of $f$ contained in $Q$ is $\C{C}^1$.
\item [{\rm (S3)}]
The solution of the initial-value problem $(\ref{IVP})$ for $(\ref{EE})$ with
$u_0 = 0$ and $u_1 = 1$ is positive on $(0, 1]$.
\end{enumerate}
\end{corollary}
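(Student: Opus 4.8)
The plan is to combine Theorem~\ref{maintheorem} with a second-order Taylor expansion of $F$ about $y^*$. Theorem~\ref{maintheorem} converts the easily checked hypothesis (S3) into a coercive lower bound on the second variation, and the Taylor expansion shows that this coercivity, together with the vanishing of the first variation, forces a strict local minimum in the $\C{C}^1$ topology (the natural topology here, since $F$ is defined on $\C{C}_0^1$).

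First I would verify that (S1) and (S2) supply exactly the standing hypotheses of Theorem~\ref{maintheorem} for the functions $P$ and $Q$ defined in (S2). Since $f \in \C{C}^2$, the $(2,2)$ entry $\partial^2 f/(\partial z_2)^2$ of the Hessian is continuous; since (S2) assumes the mixed partial $\partial^2 f/\partial z_1\partial z_2$ is $\C{C}^1$, its total $x$-derivative along $y^*$ is continuous, so $Q \in \C{C}^0$; and (S2) gives $P \in \C{C}^1$ strictly positive, hence $P(x) \ge \alpha > 0$ for some $\alpha$ by compactness of $[0,1]$. Condition (S3) is verbatim condition (C5), so by Theorem~\ref{maintheorem} condition (C3) holds: there is a $\gamma > 0$ with $\Gamma(h) \ge \gamma\int_0^1 (h'^2 + h^2)\,dx$ for every $h \in \C{H}_0^1$, and in particular for every $h \in \C{C}_0^1$.

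Next I would transfer this bound from $\Gamma$ to the full second variation $\Omega$ built from all three Hessian entries, where $R$ is the $(1,2)$ entry and $Q_\Omega$ the $(2,2)$ entry. Because $R$ is $\C{C}^1$ by (S2), the integration by parts in (\ref{compressedOmega}) is valid and yields $\Omega(h) = \Gamma(h)$ once the $Q$ of (\ref{Gamma-def}) is identified with $Q_\Omega - R'$, which is precisely the $Q$ defined in (S2). Consequently $\Omega(h) \ge \gamma\int_0^1 (h'^2 + h^2)\,dx$ for all $h \in \C{C}_0^1$.

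Finally, for $h \in \C{C}_0^1$ I would expand $F(y^* + h) - F(y^*)$ using the integral form of the second-order Taylor remainder for the integrand $f$. The first-order term vanishes after an integration by parts because $y^*$ satisfies Euler's equation (\ref{E}) and $h(0) = h(1) = 0$, leaving $F(y^*+h) - F(y^*) = \tfrac12\Omega(h) + E(h)$, where $E(h)$ collects the difference between the Hessian of $f$ at the intermediate arguments $({y^*}' + sh', y^* + sh, x)$ and at $({y^*}', y^*, x)$. The main obstacle is controlling $E(h)$: I would restrict to $h$ small in the $\C{C}^1$ norm so that these arguments remain in a fixed compact neighborhood of the graph of $({y^*}', y^*)$ on which the second partials of $f$ are uniformly continuous. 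Uniform continuity then gives $|E(h)| \le \eta(\|h\|_{\C{C}^1})\int_0^1(h'^2 + h^2)\,dx$ with $\eta(t) \to 0$ as $t \to 0$; the point is that, although the remainder integrand is quadratic in $(h',h)$, its coefficients are made small by $\C{C}^1$-smallness of $h$, while the coercive bound is measured in the same $\C{H}^1$ quantity. Choosing the $\C{C}^1$ radius so small that $\eta < \gamma/2$ yields $F(y^*+h) - F(y^*) \ge (\gamma/2)\int_0^1(h'^2+h^2)\,dx > 0$ for every nonzero admissible $h$ in that neighborhood, which is the asserted strict local minimality.
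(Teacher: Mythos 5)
Your proposal is correct and takes essentially the same route as the paper's proof: it invokes Theorem~\ref{maintheorem} through (S3) $=$ (C5) $\Rightarrow$ (C3) to obtain coercivity of $\Gamma$, identifies the full second variation $\Omega$ with $\Gamma$ via the integration by parts in (\ref{compressedOmega}) (valid since $R \in \C{C}^1$ by (S2)), and then dominates the Taylor remainder term by uniform continuity of the second partials of $f$ for $h$ small in the $\C{C}^1$ norm, exactly the decomposition $F(y^*+h) = F(y^*) + \frac{1}{2}(\bar{\Omega}(h) - \Omega(h)) + \frac{1}{2}\Gamma(h)$ used in the paper. The only differences are cosmetic: you use the integral form of the Taylor remainder where the paper uses the intermediate-value form, and your explicit modulus bound $\eta(\|h\|_{\C{C}^1}) < \gamma/2$ merely spells out the domination step that the paper states without detail.
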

\begin{proof}
Expand the integrand of (\ref{P}) in a second-order Taylor expansion around
$(y^*(x), y^*(x))$ using the intermediate value form for the remainder term.
The first derivative term in the expansion vanishes since $y^*$ satisfies
the Euler equation (\ref{E}) by (S1).
After deleting this first derivative term, the expansion has the form
\begin{equation}\label{Fexpand}
F(y^*+h) = F(y^*) + \frac{1}{2} \bar{\Omega}(h),
\end{equation}
where
\[
\bar{\Omega}(h) = \int_0^1 \bar{P}(x)h'(x)^2 + 2 \bar{R}(x) h'(x)h(x)
+ \bar{Q}(x) h(x)^2 \; dx.
\]
Here $\bar{P}$, $\bar{Q}$, and $\bar{R}$ denote the $(1,1)$, $(2,2)$, and
$(1,2)$ elements of the Hessian
\[
\nabla^2_{z} f\left( z, x \right)
\bigg|_{z = (a(x), b(x))},
\]
where
\[
\left[ \begin{array} {c}
a(x) \\
b(x) \end{array} \right] =
(1-\alpha) \left[ \begin{array} {c}
{y^*}'(x) + h'(x) \\
{y^*}(x) + h(x) \end{array} \right] +
\alpha \left[ \begin{array}{c}
{y^*}'(x) \\
{y^*}(x) \end{array} \right] \quad \mbox{for some } \alpha \in [0, 1].
\]
Add and subtract $\Omega(h)$ on the right side of (\ref{Fexpand})
and replace the second $\Omega$ by $\Gamma$ to obtain
\begin{equation}\label{Fexpand2}
F(y^*+h) = F(y^*) + \frac{1}{2} (\bar{\Omega}(h) - \Omega(h)) +
\frac{1}{2} \Gamma(h).
\end{equation}
By (S3) it follows that (C3) is satisfied.
Hence, for $h$ in a sufficiently small neighborhood of zero in the
max-norm for $\C{C}^1$, the $\Gamma(h)$ term in the expansion
(\ref{Fexpand2}) dominates the 
$(\bar{\Omega}(h) - \Omega(h))$ term.
\end{proof}

\section{Conclusions}

Theorem~\ref{maintheorem} unifies the classic second-order sufficient
optimality conditions by both showing their equivalence, and by introducing
a new equivalent condition (C5) that is easy to check; we simply integrate a
second-order linear differential equation over the interval $[0, 1]$.
If the solution is positive on $(0, 1]$, then $y^*$ is a strict local
minimizer within the framework of Corollary~\ref{second-order}.
\bigskip

\noindent
{\bf Data Availability}
Data sharing is not applicable to this article as no datasets were
generated or analyzed during the current study.
\bigskip

\noindent
{\bf Conflict of Interest}
The author has no competing interests to declare that are relevant
to the content of this article.

\bibliographystyle{siam}

\end{document}